%%%%%%%%%%%%%%%% Griffiths number.tex Sep. 22, 2012

\documentclass[reqno,12pt]{amsart}
\usepackage{txfonts}
\usepackage{mathrsfs}
\usepackage{amsmath}
\usepackage{amssymb}

\usepackage{amsmath,amssymb,amsthm}
\usepackage{verbatim}
\usepackage{latexsym, bm}

\title[On the Griffiths numbers]{On the Griffiths numbers for higher dimensional
singularities\\
Sur les nombres de Griffiths pour les singularit$\acute{\text{E}}$s
de dimension sup$\acute{\text{E}}$rieure}

\author[Rong Du]{Rong Du$^{\dag}$}

\address{Department of Mathematics\\
Shanghai Key Laboratory of PMMP\\
East China Normal University\\
Rm. 312, Math. Bldg, No. 500, Dongchuan Road\\
Shanghai, 200241, P. R. China} \email{rdu@math.ecnu.edu.cn}
\address{Current address: Department of Mathematics\\
The University of Hong Kong\\
Rm. 408, Run Run Shaw Bldg, Pokfulam, Hong Kong}
\email{rongdu@hku.hk}

\author[Yun Gao]{Yun Gao$^{\dag\dag}$}

%\keywords{Griffiths number, Hironaka number, rigid Gorenstein
%singularity,irregular singularity\\
%\indent\emph{Mots-cl$\acute{\text{e}}$s et expressions.} Griffiths
%nombre, Hironaka nombre, rigide Gorenstein singularity, singularity
%irr$\acute{\text{e}}$guli$\grave{\text{e}}$re }

\address{Department of Mathematics, Shanghai Jiao Tong University,
Shanghai 200240, P. R. of China}

\email{gaoyunmath@sjtu.edu.cn}

\thanks{$^{\dag}$ The Research Sponsored by National Natural Science Foundation of China, Shanghai Pujiang Program and Scientific Research Foundation for the Returned Overseas Chinese Scholars, State Education Ministry.}
\thanks{$^{\dag\dag}$ The Research Sponsored by National Natural Science Foundation of China and Innovation Program of Shanghai
Municipal Education Commission.}

%------------------------------- New theorems

% These will be typeset in italics

% These will be typeset in Roman
\theoremstyle{definition}
\newtheorem{theorem}[subsection]{Theorem}
\newtheorem{lemma}[subsection]{Lemma}
\newtheorem{definition}[subsection]{Definition}

\newtheorem{remark}[subsection]{Remark}
\newtheorem{conjecture}[subsection]{Conjecture}
%------------------------------- macros, if any
%%%%\DeclareMathOperator{\Max}{\textit{max}}
%-------------------------------
\let\tilde=\widetilde
\allowdisplaybreaks

\def\dashfill{\leaders\hbox{\hbox to 3.25pt{\hrulefill}\hspace*{2pt}\hbox to 3.25pt{\hrulefill}}\hfill}
\newcommand{\CITE}[1]{{[#1]}}
\let\cite=\CITE

\begin{document}

\maketitle
\begin{abstract}
We show that Yau's conjecture on the inequalities for (n-1)-th
Griffiths number and  (n-1)-th Hironaka number does not hold for
isolated rigid Gorenstein singularities of dimension greater than 2.
But his conjecture on the inequality for (n-1)-th Griffiths number
is true for irregular singularities.\\\\

\noindent R$\acute{\text{e}}$sum$\acute{\text{e}}$. \ Nous montrons
que la conjecture de Yau sur
l'in$\acute{\text{e}}$galit$\acute{\text{e}}$ du
(n-1)-i$\grave{\text{e}}$me nombre de Griffiths et celle du
(n-1)-i$\grave{\text{e}}$me nombre de Hironaka sont non-valables en
g$\acute{\text{e}}$n$\acute{\text{e}}$ral pour les
singularit$\acute{\text{e}}$s de Gorenstein isol$\acute{\text{e}}$es
rigides de dimension sup$\acute{\text{e}}$rieure $\grave{\text{a}}$
2. Cependant, la premi$\grave{\text{e}}$re conjecture sur
l'in$\acute{\text{e}}$galit$\acute{\text{e}}$ du
(n-1)-i$\grave{\text{e}}$me nombre de Griffiths est vraie pour les
singularit$\acute{\text{e}}$s
irr$\acute{\text{e}}$guli$\grave{\text{e}}$res.
\end{abstract}
\vspace{.5cm}

{\small{Keywords: Griffiths number, Hironaka number, rigid
Gorenstein singularity, irregular singularity.}}

\vspace{.3cm} {\small{Mots-cl$\acute{\text{e}}$s:} nombre de
Griffiths, nombre de Hironaka, singularit$\acute{\text{e}}$s de
Gorenstein rigides, singularit$\acute{\text{e}}$s
irr$\acute{\text{e}}$guli$\grave{\text{e}}$res.}

\vspace{.5cm} {\small{AMS subject classifications: 32S05, 14B05.}}

\section{\textbf{Introduction}}
In singularity theory, one always wants to find invariants
associated to singularities. Let $(V, o)$ be a Stein analytic space
with $o$ as its only singularity of dimension $n\ge 2$. In
\cite{Ya1}, Yau introduced a bunch of invariants which are naturally
attached to isolated singularities. These invariants are used to
characterize the different notions of sheaves of germs of
holomorphic differential forms on analytic spaces. Various formulas
which relate to all these invariants were proved in \cite{Ya1}.
Among these invariants the Griffiths number $g^{(p)}$, the Hironaka
number $h^{(p)}$ and $\delta^{(p)}$ are the most interesting
invariants. In 1981, Yau conjectured that the following two
inequalities of these invariants should be true for general isolated
normal singularities in \cite{Ya2}.

\begin{conjecture}\label{yau}
Let $(V, o)$ be a Stein analytic space with $o$ as its only normal
singularity of dimension $n\ge 2$. Then
\begin{enumerate}
\item [(i)] $g^{(n-1)}\ge n-1$;
\item [(ii)] $\delta^{(n-1)}\ge h^{(n-1)}+n-1$.
\end{enumerate}
\end{conjecture}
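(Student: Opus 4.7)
My plan is to attack Yau's Conjecture \ref{yau} by separating the problem into two qualitatively different regimes: the (potentially delicate) case of singularities with vanishing irregularity, and the case of positive-irregularity singularities where more cohomological information is available. I strongly suspect the conjecture is actually too optimistic in full generality, so a first step would be to stress-test it against the most rigid examples one can construct, namely cones over homogeneous projective varieties or carefully chosen complete intersections, where the geometry of a good resolution is explicit.

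For the possible-counterexample step, I would take $(V,o)$ to be the affine cone over a smooth projective variety $X \subset \mathbb{P}^N$ for which the canonical bundle is computable and for which the relevant Bott-type vanishings on $X$ hold, so that one can compute $g^{(n-1)}$ and $h^{(n-1)}$ via the natural resolution $\tilde V \to V$ obtained by blowing up the vertex. The invariants $g^{(p)}$, $h^{(p)}$, $\delta^{(p)}$ can all be read off from the cohomology of $\Omega^p_{\tilde V}$ and of $\Omega^p_{\tilde V}(\log E)$ using Yau's formulas from \cite{Ya1}, and for a cone with exceptional divisor $E \cong X$ these cohomology groups decompose into direct sums indexed by the twist. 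In particular, when $X$ is rigid (e.g.\ a Grassmannian or other homogeneous space), the total space is Gorenstein with appropriate numerical data, and my expectation is that for $n \geq 3$ the Hironaka contribution grows faster than the Griffiths one, forcing $g^{(n-1)}$ to fall well below $n-1$; checking one or two explicit examples of this type should settle part (i) (and therefore also part (ii)) in the negative.

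For the positive direction, I would restrict to the case when $(V,o)$ is irregular, i.e.\ $R^{n-1}\pi_\ast\mathcal{O}_{\tilde V} \neq 0$ is ``partially'' nonzero in the appropriate subspace that detects irregularity (as opposed to pure geometric genus). The plan is to use Yau's exact sequences relating $\Omega^p_V$, $\Omega^p_{\tilde V}$ and the sheaves defined via Grauert–Riemenschneider or logarithmic forms, take a long exact sequence in cohomology, and extract an explicit contribution to $g^{(n-1)}$ coming from the irregularity submodule. The key ingredient I would use is duality on the exceptional divisor together with the fact that irregularity produces nontrivial classes in $H^1(\tilde V, \Omega^{n-1}_{\tilde V})$ (or its germ-version along $E$) that survive in the Griffiths quotient; a dimension count then yields $g^{(n-1)} \geq n-1$.

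The main obstacle in the counterexample half will be controlling the Hironaka number $h^{(n-1)}$: computing Griffiths numbers for cones is a standard graded-cohomology exercise, but $h^{(n-1)}$ involves Hironaka's comparison of formal versus analytic forms, and isolating which pieces of the log-differential cohomology contribute requires a careful spectral sequence argument. In the positive half, the subtlety is that irregularity must be invoked in a form strong enough to produce the full lower bound $n-1$ rather than just $\geq 1$, which will likely require multiplying the basic nonvanishing class by the $n-1$ independent coordinate differentials on $V$ and checking their linear independence in the Griffiths quotient — this linear independence is where I expect most of the technical work to lie.
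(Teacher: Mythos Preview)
Your bipartite strategy---disprove the conjecture via rigid Gorenstein examples, then salvage inequality~(i) for irregular singularities---matches the paper's structure exactly. But both halves diverge from the paper in execution, and the second contains a genuine error.

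\textbf{Counterexample side.} You propose explicit graded-cohomology computations on cones over homogeneous varieties and flag ``controlling $h^{(n-1)}$'' as the main obstacle. The paper bypasses all of this in one line: for any isolated rigid Gorenstein singularity of dimension $n\ge 3$, Yau's own Theorem~4.6 in \cite{Ya2} already gives $\delta^{(n-1)}=0$. Since $\delta^{(p)}=g^{(p)}+s^{(p)}$ (immediate from the exact sequences in Lemma~\ref{exact}) and all three are nonnegative, this forces $g^{(n-1)}=0$, killing (i); and since $h^{(n-1)}\ge 0$ and $n-1\ge 2$, it kills (ii) as well. No Bott vanishing, no spectral sequences, and no separate estimate on $h^{(n-1)}$ are needed. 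Existence of such singularities is handled by citation: Gorenstein finite quotient singularities in dimension $\ge 3$ exist (\cite{M-B-D}, \cite{Ya-Yu}, \cite{An}) and are rigid by Schlessinger \cite{Sc}.

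\textbf{Irregular side.} Your closing step---``multiplying the basic nonvanishing class by the $n-1$ independent coordinate differentials on $V$''---is wrong as stated: wedging an $(n-1)$-form with $dz_i$ yields an $n$-form, so this cannot produce classes in the Griffiths quotient for $g^{(n-1)}$. The paper's mechanism is different and more elementary. From irregularity one extracts, after an inductive minimal-pole-order refinement over the irreducible components of the exceptional divisor $A$, a form $\omega_k\in\Gamma(M\setminus A,\Omega^{n-1}_M)\setminus\Gamma(M,\Omega^{n-1}_M)$ such that $\pi^*(z_j)\omega_k$ is holomorphic on $M$ for every ambient coordinate $z_j$. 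One then multiplies by \emph{powers of a single coordinate function}, taking $(\pi^*z_1)^j\omega_k$ for $j=1,\dots,n-1$. Linear independence of these modulo $\pi^*\Gamma(V,\Omega^{n-1}_V)$ is established by a direct vanishing-order comparison along the relevant component $A_k$: every generator $d\pi^*z_{i_1}\wedge\cdots\wedge d\pi^*z_{i_{n-1}}$ of the pulled-back forms vanishes to order at least $(n-1)r_1-1$ (or, in the simple-pole case, one compares coefficients of $dx_2\wedge\cdots\wedge dx_n$ and invokes $H^0(M,\Omega^{n-1}_M)=H^0(M,\Omega^{n-1}_M(\log A))$ from \cite{St-St}), whereas each $(\pi^*z_1)^j\omega_k$ has strictly smaller order. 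Your proposed route through duality on the exceptional divisor and classes in $H^1$ is vague by comparison and, even if it could be made to work, does not visibly produce the exact constant $n-1$; it is precisely the vanishing-order bookkeeping against powers of one function that pins that number down.
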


In (\cite{Ya2}), he confirmed his conjecture for surface
singularities and non-rational singularities with ``good"
$\mathbb{C}^*$-action of dimension greater than 2. As an application
, he showed that any Gorenstein surface singularities with ``good"
$\mathbb{C}^*$-action are not rigid.  Although this conjecture
inspired researches in singularity theory for long time, we will
show that this conjecture is not true in general. But the first part
of his conjecture holds for irregular singularities.

\section{\textbf{Preliminaries}}
Let $(V,o)$ be a normal isolated singularity of dimension $n\ge 2$.
It is well known that holomorphic functions defined on $V-\{o\}$ can
be extended across $o$.  However for holomorphic forms, the
situation is completely different. Even if we assume that the
holomorphic forms defined on $V-\{o\}$ are $L^2$-integrable in a
neighborhood of $o$ in the sense of Griffiths (\cite{Gr}), it is not
clear whether holomorphic forms can be extended across $o$. In
\cite{Ya1}, the Griffiths number $g^{(p)}$ was introduced to measure
how many $L^2$-integrable holomorphic $p$-forms on $V-\{o\}$ cannot
be extended across $o$. Similarly, Yau defined another class of
invariants $\delta^{(p)}$ which measures how many holomorphic
$p$-forms on $V-\{o\}$ cannot be extended across $o$ in \cite{Ya2}.

In \cite{Ya1}, Yau studied the relations among all kinds of sheaves
of germs of holomorphic forms which were also considered by
Grauert-Grothendieck, Noether, Ferrari and Siu.

\begin{enumerate}
\item
Noether: $\bar{\Omega}^p_V:=\pi_*\Omega^p_M$, where $\pi:
M\longrightarrow V$ is a resolution of singularities of $V$.
\item
Grauert-Grothendieck:
$\Omega^p_V:=\Omega_{\mathbb{C}^N}^p/\mathscr{K}^p$, where
$\mathscr{K}^p=\{f\alpha+dg\wedge\beta :
\alpha\in\Omega_{\mathbb{C}^N}^p; \beta\in
\Omega_{\mathbb{C}^N}^{p-1}; f, g\in\mathscr{I}\}$ and $\mathscr{I}$
is the ideal sheaf of $V$ in $\mathbb{C}^N$.
\item
Ferrari:
$\widetilde{\Omega}^p_V:=\Omega_{\mathbb{C}^N}^p/\mathscr{H}^p$,
where $\mathscr{H}^p=\{\omega\in\Omega_{\mathbb{C}^N}^p:
\omega|_{V\backslash V_{sing}}=0\}$.
\item
Siu: $\bar{\bar{\Omega}}^p_V:=\theta_*\Omega^p_{V\backslash
V_{sing}}$ where $\theta : V\backslash V_{sing}\longrightarrow V$ is
the inclusion map and $V_{sing}$ is the singular set of $V$.
\end{enumerate}

If $V$ is a normal variety, then dualizing sheaf $\omega_V$ of
Grothendieck is actually the sheaf $\bar{\bar{\Omega}}^n_V$. Clearly
${\Omega}^p_V$, $\widetilde{\Omega}^p_V$ are both coherent.
$\bar{\Omega}^p_V$ is a coherent sheaf because $\pi$ is a proper
map. $\bar{\bar{\Omega}}^p_V$ is also a coherent sheaf by the
theorem of Siu (see Theorem A of \cite{Si}).

\begin{lemma}\label{exact}
There are several short exact sequences:
\begin{equation}\label{K}
0\longrightarrow K^p\longrightarrow\Omega_V^p\longrightarrow
\tilde{\Omega}_V^p\longrightarrow 0,
\end{equation}
\begin{equation}\label{H}
0\longrightarrow\tilde{\Omega}_V^{p}\longrightarrow\bar{\Omega}_V^{p}\longrightarrow
H^{p}\longrightarrow 0,
\end{equation}
and
\begin{equation}\label{J}
0\longrightarrow\bar{\Omega}_V^{p}\longrightarrow\bar{\bar{\Omega}}_V^{p}\longrightarrow
J^{p}\longrightarrow 0,
\end{equation}
where $K^p$, $H^p$ and $J^p$ are all supported on the singular set.
\end{lemma}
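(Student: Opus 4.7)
The plan is to build all three exact sequences from natural morphisms between the four sheaves of differentials, and then to observe that because every one of these sheaves agrees with the ordinary sheaf of holomorphic $p$-forms at smooth points of $V$, each kernel and cokernel appearing will automatically be supported on $V_{sing}=\{o\}$. So the real work is (a) constructing the three comparison maps and (b) checking exactness at the middle term in each case.

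For \eqref{K}, I would start from the definitions of $\mathscr{K}^p$ and $\mathscr{H}^p$ and show $\mathscr{K}^p\subset\mathscr{H}^p$. Both generators of $\mathscr{K}^p$ vanish on $V\setminus V_{sing}$: if $f\in\mathscr{I}$ then $f\alpha$ restricts to zero on the smooth part of $V$, and if $g\in\mathscr{I}$ then $g$ vanishes identically on $V$ so $dg\wedge\beta$ also restricts to zero on $V\setminus V_{sing}$. Hence $\Omega_V^p\twoheadrightarrow\widetilde{\Omega}_V^p$ is well defined and surjective with kernel $K^p:=\mathscr{H}^p/\mathscr{K}^p$, which is supported at $o$ since the two ideals agree generically on $V$.

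For \eqref{H}, I would construct the map $\widetilde{\Omega}_V^p\to\bar{\Omega}_V^p$ by pullback through the resolution: given a local section $\omega$ of $\Omega_{\mathbb{C}^N}^p$, form $\pi^*\omega$ on $M$ (using that $\pi$ factors through the inclusion $V\hookrightarrow\mathbb{C}^N$, or equivalently that $\pi$ lifts to a map $M\to\mathbb{C}^N$ in a neighbourhood of $\pi^{-1}(o)$). This gives a section of $\pi_*\Omega_M^p=\bar{\Omega}_V^p$, and if $\omega\in\mathscr{H}^p$ then $\pi^*\omega=0$ on the dense open set $M\setminus\pi^{-1}(o)$ and hence on all of $M$, so the construction descends to $\widetilde{\Omega}_V^p$. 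Injectivity is the converse: if $\pi^*\omega=0$ on $M$, then restricting to the locus where $\pi$ is an isomorphism gives $\omega|_{V\setminus V_{sing}}=0$, i.e.\ $\omega\in\mathscr{H}^p$. Define $H^p$ as the cokernel; at any smooth point of $V$ the map is an isomorphism (both sides are the usual $\Omega^p$), so $H^p$ is supported on $V_{sing}$.

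For \eqref{J}, I would take the map $\bar{\Omega}_V^p\to\bar{\bar{\Omega}}_V^p$ to be the obvious restriction: a section of $\pi_*\Omega_M^p$ over $U$ is a form on $\pi^{-1}(U)$, which I restrict to $\pi^{-1}(U\setminus V_{sing})\cong U\setminus V_{sing}$ to obtain a section of $\theta_*\Omega_{V\setminus V_{sing}}^p$ over $U$. Injectivity follows from the fact that $M$ is smooth (hence reduced) and $\pi^{-1}(V_{sing})$ is nowhere dense, so a holomorphic $p$-form vanishing on the complement of $\pi^{-1}(V_{sing})$ vanishes everywhere. Define $J^p$ as the cokernel; again it is supported on $V_{sing}$ because both sheaves coincide with $\Omega^p$ on the smooth locus. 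The only mild obstacle I anticipate is the well-definedness of the pullback in \eqref{H} (making sure $\pi^*\omega$ extends across $\pi^{-1}(o)$), but that is automatic since $\omega$ comes from a form on the ambient smooth space $\mathbb{C}^N$ and $\pi$ extends naturally to $M\to\mathbb{C}^N$.
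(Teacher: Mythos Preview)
Your argument is correct. You construct the three natural comparison maps, verify the needed inclusions and injectivity by restricting to the smooth locus and invoking the identity principle, and then define $K^p$, $H^p$, $J^p$ as the appropriate kernel or cokernel; the support claim follows because all four sheaves agree with ordinary $\Omega^p$ at smooth points.

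There is nothing to compare against: the paper states this lemma without proof, treating it as background drawn from \cite{Ya1}. So your write-up supplies details the paper omits. One small stylistic remark: in sequence~\eqref{K} you phrase things in terms of the quotient $\mathscr{H}^p/\mathscr{K}^p$, which is fine, but you might say explicitly that at a smooth point of $V$ one has $\mathscr{K}^p_x=\mathscr{H}^p_x$ (the standard presentation of K\"ahler differentials for a smooth complete intersection, locally) to make the support statement for $K^p$ fully transparent. Likewise in~\eqref{H} the assertion that $\pi$ extends to a holomorphic map $M\to\mathbb{C}^N$ is just the composition $M\xrightarrow{\pi}V\hookrightarrow\mathbb{C}^N$, so your anticipated ``mild obstacle'' is indeed not an obstacle at all.
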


\begin{definition}\label{mgs}
Let $(V, o)$ be a Stein analytic space with $o$ as its only
singularity. Let $K^p$, $H^p$ and $J^p$ be defined as in (\ref{K}),
(\ref{H}), and (\ref{J}). Then the invariants $m^{(p)}$, $g^{(p)}$
and $s^{(p)}$ at $o$ are defined to be dim$K^p_o$, dim$H^p_o$ and
dim$J^p_o$ respectively. $\delta^{(p)}$ is defined to be the
dimension of the co-kernel of the natural map
${\Omega}_V^{P}\rightarrow\bar{\bar{\Omega}}_V^{p}$. We also define
the geometric genus of the singularity $p_g$ to be $s^{(n)}$ and
irregularity of the singularity $q$ to be $s^{(n-1)}$.
\end{definition}

The following lemma can be found as Lemma 2.7 in \cite{Ya1}. We will
provide a short proof here.
\begin{lemma}\label{gp}
Let $(V, o)$ be a Stein analytic space with $o$ as its only isolated
singularity. Let $\pi: M\rightarrow V$ be a resolution of the
singularity. Then
\[g^{(p)}=\text{dim}\Gamma(M, \Omega_M^p)/\pi^*\Gamma(V, \Omega_V^p),\]
and
\[\delta^{(p)}=\text{dim}\Gamma(V-\{o\}, \Omega_V^p)/\Gamma(V, \Omega_V^p),\]
where we identify $\Gamma(V, \Omega_V^p)$ with its image in
$\Gamma(V-\{o\}, \Omega_V^p)$.
\end{lemma}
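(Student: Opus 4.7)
The plan is to deduce both equalities by applying the global section functor to the short exact sequences in Lemma~\ref{exact} (and to the defining map of $\delta^{(p)}$), and invoking Cartan's Theorem B on the Stein space $V$ together with the identification $\bar{\Omega}_V^p=\pi_*\Omega_M^p$.

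For the first formula, I would take $\Gamma(V,-)$ of sequence (\ref{H}). Since $\tilde{\Omega}_V^p$ is coherent and $V$ is Stein, $H^1(V,\tilde{\Omega}_V^p)=0$, so
\[
0\longrightarrow\Gamma(V,\tilde{\Omega}_V^p)\longrightarrow\Gamma(V,\bar{\Omega}_V^p)\longrightarrow\Gamma(V,H^p)\longrightarrow 0
\]
is exact. Because $H^p$ is supported at $o$, $\Gamma(V,H^p)=H^p_o$, which has dimension $g^{(p)}$ by Definition~\ref{mgs}. The identity $\bar{\Omega}_V^p=\pi_*\Omega_M^p$ gives $\Gamma(V,\bar{\Omega}_V^p)=\Gamma(M,\Omega_M^p)$. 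To finish I need to identify $\Gamma(V,\tilde{\Omega}_V^p)$ with $\pi^*\Gamma(V,\Omega_V^p)$ inside $\Gamma(M,\Omega_M^p)$. For this I would apply $\Gamma(V,-)$ to (\ref{K}); coherence of $K^p$ and Cartan B make $\Gamma(V,\Omega_V^p)\to\Gamma(V,\tilde{\Omega}_V^p)$ surjective, and the composition $\Omega_V^p\to\tilde{\Omega}_V^p\hookrightarrow\bar{\Omega}_V^p=\pi_*\Omega_M^p$ agrees on $V-\{o\}$ (where all four sheaves of forms coincide and $\pi$ is a biholomorphism) with the pullback $\pi^*$, hence globally by the separatedness of these sheaves on the smooth locus. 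This yields the first formula.

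For $\delta^{(p)}$, let $Q:=\mathrm{coker}(\Omega_V^p\to\bar{\bar{\Omega}}_V^p)$; since the two sheaves agree on the smooth locus $V-\{o\}$, $Q$ is supported at $o$ and $\Gamma(V,Q)=Q_o$ has dimension $\delta^{(p)}$. Factor the natural map into
\[
0\longrightarrow\mathrm{ker}\longrightarrow\Omega_V^p\longrightarrow A\longrightarrow 0,\qquad 0\longrightarrow A\longrightarrow\bar{\bar{\Omega}}_V^p\longrightarrow Q\longrightarrow 0,
\]
with $A$ coherent. Cartan B gives $H^1(V,A)=0$, hence
\[
\Gamma(V,Q)=\Gamma(V,\bar{\bar{\Omega}}_V^p)\big/\mathrm{image}\,\Gamma(V,\Omega_V^p).
\]
Finally, $\Gamma(V,\bar{\bar{\Omega}}_V^p)=\Gamma(V,\theta_*\Omega^p_{V-V_{sing}})=\Gamma(V-\{o\},\Omega^p_{V})$, after identifying $\Omega_V^p$ on the smooth locus with the usual sheaf of holomorphic $p$-forms, which gives the second formula.

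The routine inputs (coherence of all the sheaves involved, Cartan's Theorem B on $V$, and the defining identity $\bar{\Omega}_V^p=\pi_*\Omega_M^p$) are all already stated or implicit in the excerpt. The one step that requires care and that I expect to be the real bookkeeping obstacle is the identification of the global-section images with $\pi^*\Gamma(V,\Omega_V^p)$ and with $\Gamma(V,\Omega_V^p)$ respectively, i.e.\ tracking that the connecting maps between the four sheaves $\Omega_V^p,\tilde{\Omega}_V^p,\bar{\Omega}_V^p,\bar{\bar{\Omega}}_V^p$ restrict on $V-\{o\}$ to the identity on the common sheaf of holomorphic $p$-forms, so that the algebraic surjections from Cartan B match the geometric pullback/restriction maps that appear in the statement.
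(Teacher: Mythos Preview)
Your proposal is correct and follows essentially the same approach as the paper: apply Cartan's Theorem B to the short exact sequences (\ref{K}) and (\ref{H}) on the Stein space $V$, then identify $\Gamma(V,\bar{\Omega}_V^p)=\Gamma(M,\Omega_M^p)$. The paper packages this as a single four-term exact sequence $0\to K^p_o\to\Gamma(V,\Omega_V^p)\to\Gamma(V,\bar{\Omega}_V^p)\to H^p_o\to 0$ and dismisses the second formula as ``obvious by the definition,'' but the content is identical to what you wrote out in more detail.
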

\begin{proof}
Because $K^p$ is supported on the isolated singularity $o$ and
$\tilde{\Omega}_V^p$ is coherent on the Stein space $V$, they both
have vanishing cohomology groups of degree greater than $0$. So,
from (\ref{K}) and (\ref{H}), we can have an exact sequence as
follows
\begin{equation*}
0\longrightarrow
K^p_o\longrightarrow\Gamma(V,\Omega_V^p)\longrightarrow \Gamma(V,
\bar{\Omega}_V^{p})\longrightarrow H^{p}_o\longrightarrow 0.
\end{equation*}
Therefore we get the first equality by identifying $\Gamma(V,
\bar{\Omega}_V^{p})$ with $\Gamma(M, \Omega_M^p)$. The second
equality is obvious by the definition.
\end{proof}

\section{\textbf{Inequalities for invariants of singularities}}
We will show that Yau's conjecture mentioned in the first section is
not true in general.
\begin{theorem}\label{counter ex} Let $(V, o)$ be an isolated rigid Gorenstein
singularity of dimension $n\ge 3$. Then $(V, o)$ does not satisfy
the above two inequalities.
\end{theorem}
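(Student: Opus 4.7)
The strategy is to leverage the Gorenstein hypothesis to reinterpret $(n-1)$-forms as vector fields via contraction with the dualizing form, and then translate the vanishing $T^1_{V,o}=0$ (which by Schlessinger characterizes rigidity) into the vanishings $g^{(n-1)}=0$ and $\delta^{(n-1)}=0$. Since $h^{(n-1)}\ge 0$ and $n-1\ge 2$, this simultaneously violates both strict lower bounds of Conjecture~\ref{yau}: indeed $0=g^{(n-1)}<n-1$ and $0=\delta^{(n-1)}<h^{(n-1)}+(n-1)$.

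First I would set up a duality dictionary. Since $V$ is Stein and Gorenstein, after shrinking we fix a nowhere-vanishing section $\omega$ of $\omega_V=\bar{\bar{\Omega}}_V^n$; interior multiplication by $\omega$ yields an isomorphism $\iota_\omega:\Theta_V\xrightarrow{\sim}\bar{\bar{\Omega}}_V^{n-1}$ on $V\setminus\{o\}$, and this identification matches the Noether, Ferrari and Grauert--Grothendieck subsheaves of $\bar{\bar{\Omega}}_V^{n-1}$ with the corresponding subsheaves of $\Theta_V$. Consequently the short exact sequences (\ref{K})--(\ref{J}) at $p=n-1$ translate into their tangent-sheaf analogues, and the invariants $g^{(n-1)}$, $h^{(n-1)}$, $\delta^{(n-1)}$ are identified with the corresponding invariants attached to $\Theta_V$.

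Next I would invoke rigidity. By Schlessinger, $T^1_{V,o}=0$ is equivalent to rigidity, and in dimension $n\ge 3$ this group is canonically the cokernel of $\Gamma(V,\Theta_V)\to\Gamma(V\setminus\{o\},\Theta_V)$: a Hartogs-type argument using reflexivity of $\Theta_V$ and the depth bound $\mathrm{depth}\,\mathcal{O}_{V,o}\ge 3$ supplies this presentation. Transporting through the dictionary above then yields $\delta^{(n-1)}=0$, and running the same identification on a resolution $\pi:M\to V$---pulling $\omega$ back to a nowhere-vanishing section of $\omega_M$ over $\pi^{-1}(V\setminus\{o\})$ and using Grauert--Riemenschneider to control the exceptional fibre---gives $\pi^*\Gamma(V,\Omega_V^{n-1})=\Gamma(M,\Omega_M^{n-1})$, i.e.\ $g^{(n-1)}=0$ by Lemma~\ref{gp}.

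The main obstacle is this second step: one must pin down which precise sheaf version of the cokernel represents $T^1$, verify that the reflexivity/depth hypotheses needed for the cokernel presentation truly hold in this generality, and then match the Grauert--Grothendieck cokernel with the target of $\iota_\omega$ rather than a slightly different variant. Once that dictionary is fully nailed down the two vanishings drop out and both parts of the theorem follow at once; producing an explicit rigid Gorenstein example in dimension $\ge 3$ (e.g.\ a suitable quotient or a cone of Schlessinger type) will then display concrete numerical counterexamples to Conjecture~\ref{yau}.
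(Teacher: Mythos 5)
Your overall strategy (a Gorenstein contraction dictionary plus rigidity) is in the right ballpark, but the pivotal step is wrong. You assert that for $n\ge 3$ the group $T^1_{V,o}$ is ``canonically the cokernel of $\Gamma(V,\Theta_V)\to\Gamma(V\setminus\{o\},\Theta_V)$''. That cokernel is zero for \emph{every} normal isolated singularity of dimension $\ge 2$, rigid or not: $\Theta_V=\mathcal{H}om(\Omega_V^1,\mathcal{O}_V)$ is reflexive, so vector fields on $V\setminus\{o\}$ always extend across $o$ --- the very Hartogs-type argument you invoke kills the cokernel unconditionally instead of identifying it with $T^1$. Schlessinger's actual statement is that $T^1_{V,o}\cong H^1(V\setminus\{o\},\Theta)$ when the depth at $o$ is at least $3$: a first cohomology group on the punctured space, not a cokernel of global sections. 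With your identification, every normal isolated singularity of dimension $\ge 3$ would come out rigid, and every Gorenstein one would have $\delta^{(n-1)}=0$, contradicting for instance Yau's verified case of non-rational Gorenstein singularities with good $\mathbb{C}^*$-action, for which $\delta^{(n-1)}\ge h^{(n-1)}+n-1>0$. The translation of $\delta^{(n-1)}$ suffers from the same confusion: under the contraction isomorphism, $\Gamma(V\setminus\{o\},\Omega^{n-1})\cong\Gamma(V,\Theta_V)$ indeed, but $\delta^{(n-1)}$ is the quotient of this space by the image of the \emph{K\"ahler} forms $\Gamma(V,\Omega_V^{n-1})$ (the Grauert--Grothendieck sheaf), which is in general a proper subspace; its vanishing is not a consequence of reflexivity and is precisely the nontrivial content that must be supplied.

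That bridge from rigidity to $\delta^{(n-1)}=0$ for Gorenstein singularities is exactly Yau's ``low estimate'' of $T^1_V$ (Theorem 4.6 of \cite{Ya2}), and the paper simply cites it rather than reproving it. The paper also avoids your separate resolution/Grauert--Riemenschneider argument for $g^{(n-1)}$ entirely: the inclusions $\tilde{\Omega}_V^{p}\subset\bar{\Omega}_V^{p}\subset\bar{\bar{\Omega}}_V^{p}$ coming from Lemma \ref{exact} give $\delta^{(p)}=g^{(p)}+s^{(p)}$ with all terms non-negative, so $\delta^{(n-1)}=0$ forces $g^{(n-1)}=0$ at once, and both inequalities of Conjecture \ref{yau} fail because $n-1\ge 2$. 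To salvage your proof you must either cite Yau's estimate or reprove it via the correct identification $T^1_{V,o}\cong H^1(V\setminus\{o\},\Theta)$ together with a genuine comparison against the Grauert--Grothendieck cokernel; the dictionary by itself does not do that work.
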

\begin{proof}
From the exact sequences in Lemma \ref{exact} one immediately
concludes the inclusions of finite dimensional vector spaces
\[\tilde{\Omega}_V^{p}\subset\bar{\Omega}_V^{p}\subset\bar{\bar{\Omega}}_V^{p}\]
from which the equality $\delta^{(p)}=g^{(p)}+s^{(p)}$ immediately
follows (see Lemma 1.3 in \cite{Ya2}).

A rigid Gorenstein singularity has $\delta^{(n-1)}=0$ (cf. Theorem
4.6 in \cite{Ya2}) from which it follows that $g^{(n-1)}$. Hence a
rigid Gorenstein singularity provides a counterexample to the
conjectures that $g^{(n-1)}\ge n-1$ and $\delta^{(n-1)}\ge p_g+n-1$.

%On the other hand, quotient singularities are all rational (see
%\cite{Vi} Proposition 1), i.e. the geometry genus $p_g=h^{(n-1)}$ of
%$(V, o)$ is $0$. From the result of Van Straten and Steenbrink (see
%\cite{St-St} Corollary 1.4), the irregularity $q$ is not greater
%than $p_g$. So $q=s^{(n-1)}=0$. Therefore
%$\delta^{(n-1)}=g^{(n-1)}=0$ which contradicts to the statement of
%the conjecture.
\end{proof}

\begin{remark}
The isolated rigid Gorenstein singularities of dimension $n\ge 3$ do
exist. For example, the existence of an isolated Gorenstein finite
quotient singularity of dimension $n\ge 3$ is given by \cite{M-B-D},
\cite{Ya-Yu} or \cite{An}. From Theorem 3 in \cite{Sc}, such
singularity is rigid.
\end{remark}

Next, we are going to show that the inequality (\ref{i}) holds for
irregular singularities.

\begin{definition}
A normal isolated  singularity is called regular if the irregularity
$q=0$. Otherwise, it is called irregular.
\end{definition}

\begin{theorem}
Let $(V, o)$ be a normal isolated irregular singularity of dimension
$n\ge 2$. Then
\begin{equation*}
g^{(n-1)}\ge n-1.\label{i}
\end{equation*}
\end{theorem}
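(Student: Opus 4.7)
The plan is to use the decomposition $\delta^{(n-1)} = g^{(n-1)} + s^{(n-1)} = g^{(n-1)} + q$ from the proof of Theorem~\ref{counter ex}, together with Lemma~\ref{gp}, to reformulate $g^{(n-1)} \ge n-1$ as the statement that
\[
\dim_{\mathbb{C}}\bigl(\Gamma(M,\Omega_M^{n-1})/\pi^{*}\Gamma(V,\Omega_V^{n-1})\bigr)\ge n-1.
\]
In other words, it suffices to produce $n-1$ holomorphic $(n-1)$-forms on the resolution $M$ that are linearly independent modulo the Grauert--Grothendieck pullbacks from $V$.

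First I would use the irregularity hypothesis $q = s^{(n-1)} \ge 1$ to select a form $\omega \in \Gamma(V\setminus\{o\},\Omega_V^{n-1}) = \Gamma(V,\bar{\bar\Omega}_V^{n-1})$ whose class in $J^{n-1}_o$ is nonzero. By definition $\pi^{*}\omega$ fails to extend to a holomorphic form on $M$, but as a meromorphic section of $\Omega_M^{n-1}$ it has only a finite-order pole along the exceptional divisor $E \subset M$. Consequently, there is some $k \ge 1$ such that $\pi^{*}(f\omega) = \pi^{*}f\cdot\pi^{*}\omega$ is holomorphic on all of $M$ whenever $f \in \mathfrak{m}_o^k \subset \mathcal{O}_{V,o}$. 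This gives the $\mathbb{C}$-linear map
\[
\mu : \mathfrak{m}_o^k \longrightarrow \Gamma(M,\Omega_M^{n-1})/\pi^{*}\Gamma(V,\Omega_V^{n-1}), \qquad f \longmapsto [f\omega],
\]
so the theorem reduces to the rank estimate $\operatorname{rank}(\mu) \ge n-1$.

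The main obstacle is establishing this rank bound. The kernel $I := \ker(\mu) = \{f \in \mathfrak{m}_o^k : f\omega \in \Gamma(V,\Omega_V^{n-1})\}$ is an ideal, and the non-triviality of the class of $\omega$ in $J^{n-1}_o$ forces $I$ to be a \emph{proper} sub-ideal of $\mathfrak{m}_o^k$; the real task is extracting codimension at least $n-1$ rather than merely positive codimension. My approach would be to couple the local algebra of the $n$-dimensional ring $\mathcal{O}_{V,o}$ (using that $\dim_{\mathbb{C}}\mathfrak{m}_o^k/\mathfrak{m}_o^{k+1}$ grows polynomially of degree $n-1$ in $k$) with the global structure of $M$ and $E$, tracking how multiplication by functions with various leading terms propagates the non-Kähler character of $\omega$ into $n-1$ independent directions in the cokernel. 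Pinning down this interplay between the $n$-dimensionality of $(V,o)$ and the irregularity of $\omega$---in particular ensuring that enough products $f_i\omega$ remain non-Kähler after pullback---is where the technical substance of the argument will concentrate.
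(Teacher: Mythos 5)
Your reduction via Lemma \ref{gp}, and the idea of multiplying a non-extendable $(n-1)$-form by functions vanishing at $o$ to manufacture holomorphic forms on $M$, is indeed the paper's starting point. But your proposal stops exactly where the theorem lives: you give no argument that $\operatorname{rank}(\mu)\ge n-1$, and even the properness of $\ker(\mu)$ in $\mathfrak{m}_o^k$ is asserted rather than proved (it does not follow formally from $[\omega]\neq 0$ in $J^{n-1}_o$: knowing $f\omega=\pi^*\eta_f$ with $\eta_f\in\Gamma(V,\Omega_V^{n-1})$ for all $f\in\mathfrak{m}_o^k$ does not immediately let you conclude that $\pi^*\omega$ extends across $A$). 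Deferring the rank bound to ``where the technical substance of the argument will concentrate'' means the proof is simply not there.

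Moreover, the mechanism you hint at --- the Hilbert-function growth $\dim\mathfrak{m}_o^k/\mathfrak{m}_o^{k+1}\sim k^{n-1}$ --- is not what makes the paper's proof work, and your setup actually undercuts the mechanism that does. The quotient is taken modulo the $\mathcal{O}(V)$-module $\pi^*\Gamma(V,\Omega_V^{n-1})$, which is infinite dimensional, so a large source space gives no rank bound by itself; one must certify that specific classes stay independent modulo this whole module. The paper does this by a vanishing-order comparison along a single exceptional component $A_k$: (i) $\omega_k$ is chosen with \emph{minimal} pole order (iterating over the components of $A$) so that already one coordinate function clears the pole, i.e.\ $\pi^*(z_j)\omega_k$ is holomorphic for every $j$; this keeps the vanishing orders of the forms $(\pi^*(z_1))^j\omega_k$, $1\le j\le n-1$, at most $(n-1)r_1-2$ along $A_k$, whereas taking $f\in\mathfrak{m}_o^k$ with $k\gg 0$, as you do, forces $f\omega$ to vanish to high order along $A$ and makes it indistinguishable from pullbacks by any order count; (ii) the pullback module is generated over the holomorphic functions by the wedges (\ref{wedge}), each of which vanishes to order at least $(n-1)r_1-1$ along $A_k$ --- this lower bound is where the number $n-1$ actually enters (it is the number of wedge factors, i.e.\ the degree of the form, not the degree of the Hilbert polynomial of $\mathcal{O}_{V,o}$); (iii) when $\omega_k$ has only simple poles the order count degenerates, and the paper must invoke Theorem 1.3 of \cite{St-St}, namely $H^0(M,\Omega_M^{n-1})=H^0(M,\Omega_M^{n-1}(\log A))$, to extract a non-logarithmic polar term $x_1^{-1}g\,dx_2\wedge\cdots\wedge dx_n$ and run a refined comparison on that coefficient alone. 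None of these three ingredients appears in, or is replaceable by, the local-algebra considerations of your outline, so the gap is not a routine verification but the entire core of the argument.
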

\begin{proof}
We will improve the methods used in Yau's paper (\cite{Ya2} Theorem
3.1) to estimate the lower bound of $g^{(n-1)}$. Let $\pi: (M,
A)\rightarrow (V, 0)$ be a resolution of the singularity with
$A=\cup_{i=1}^{s} A_i$ as the exceptional set, where each $A_i$ is
an irreducible component. We can assume without loss of generality
that the exceptional set $A$ is a divisor in $M$ with normal
crossings. Embed $(V, o)$ into $(\mathbb{C}^m, 0)$ whose coordinate
functions are supposed to be $z_1,\cdots, z_m$. Since the
singularity is irregular, we know that
$$q=\text{dim} \Gamma(M\backslash A, \Omega_M^{n-1})/\Gamma(M,
\Omega_M^{n-1})>0.$$ So there exists a holomorphic $(n-1)$-form
$\omega_0$ on $M\backslash A$ that can not be extended across $A$,
i.e. $\omega_0\in \Gamma(M\backslash A, \Omega^{n-1}_M)\backslash
\Gamma(M, \Omega^{n-1}_M)$, which means that $\omega_0$ must have
poles along some irreducible component $A_1$ of $A$. Suppose
$\omega_1$ has the lowest order of poles along $A_1$ among all
$\omega\in \Gamma(M\backslash A, \Omega^{n-1}_M)\backslash \Gamma(M,
\Omega^{n-1}_M)$. Then $\pi^*(z_j)\omega_1$ is holomorphic along
$A_1$ for all $j$, $1\le j\le m$. Otherwise, it contracts to the
assumption. If $\pi^*(z_j)\omega_1\notin \Gamma(M, \Omega^{n-1}_M)$
for some $j$, it must have poles along another irreducible component
$A_2$ of $A$. Suppose $\omega_2\in\Gamma(M\backslash A,
\Omega^{n-1}_M)$ is holomorphic along $A_1$ and has the lowest order
of poles along $A_2$. Then $\pi^*(z_j)\omega_2$ must be holomorphic
along $A_1$ and $A_2$, for all $j$, $1\le j\le m$. Similarly, we can
continue such kind of steps to generate $\omega_3$, $\omega_4$,
$\cdots$ if it is available. Since the number of irreducible
components of $A$ is finite, by induction, there exists a non-empty
set $W_k=\{\omega\in \Gamma(M\backslash A, \Omega^{n-1}_M)\backslash
\Gamma(M, \Omega^{n-1}_M): \omega$ has poles along some irreducible
component $A_k$ and holomorphic along $A_1,\dots, A_{k-1}$ such that
$\pi^*(z_j)\omega\in \Gamma(M, \Omega^{n-1}_M)$ for all $j$, $1\le
j\le m$$\}$.

Suppose $\omega_k\in W_k$. We will separate our argument into two
parts according to the order of poles of $\omega_k$.

\begin{itemize}
\item
The order of poles of $\omega_k$ is greater than $1$ along some
exceptional component $A_k$:

Choose a point $b$ in $A_k$ which is a smooth point of $A$. Let
$(x_1, x_2,$ $\cdots, x_n)$ be a coordinate system centered at $b$
such that $A_k$ is given locally by $x_1=0$ at $b$. Take the power
series expansion of $\pi^*(z_j)$ around $b$:
\begin{equation}\label{z}
\pi^*(z_j)\circeq x_1^{r_j}f_j, 1\le j\le m,
\end{equation}
where $f_j$ is a holomorphic function such that $f_j(0, x_2, \cdots,
x_n)\neq 0$ and ``$\circeq$" means local equality around $b$.
Without loss of generality, we may assume $r_1=min\{r_1, \dots,
r_m\}$. It is easy to see that the holomorphic $(n-1)$-form
\begin{equation}\label{wedge}
d\pi^*(z_{i_1})\wedge d\pi^*(z_{i_2})\wedge \cdots \wedge
d\pi^*(z_{i_{n-1}})
\end{equation}
has vanishing order at least $(n-1)r_1-1$ along $A_k$.  So the
vanishing orders of the $(n-1)$-form $(\pi^*(z_1))^j\omega_k$, $1\le
j\le n-1$, along $A_k$ are at most $(n-1)r_1-2$. These $(n-1)$-forms
cannot be the linear combinations of (\ref{wedge}). Therefore we
have produced at least $n-1$ holomorphic $(n-1)$-forms on $M$ which
are not obtained by pulling back of holomorphic $(n-1)$-forms on
$V$.
\item
The maximal order of poles of $\omega_k$ is equal to $1$:

Because
\[H^0(M, \Omega_M^{n-1})=H^0(M, \Omega_M^{n-1}(log A))\] from
Theorem 1.3 in \cite{St-St}, there exists an exceptional component
$A_k$ such that the local expression of $\omega_k$ around some
smooth point $b'$ on $A$ contains a summand of the form
\[x_1^{-1}g dx_2\wedge dx_3\wedge \cdots \wedge dx_n,\] where $g$ is a holomorphic function such that $g(0,
x_2, \cdots, x_n)\neq 0$ and $(x_1, x_2, \cdots, x_n)$ is a
coordinate system centered at $b'$ such that $A_k$ is given locally
by $x_1=0$ at $b'$. Similarly, if we express $\pi^*(z_j)$, where
$1\le j\le m$, locally as (\ref{z}) and still assume $r_1=min\{r_1,
\dots, r_m\}$, then the coefficient before $dx_2\wedge dx_3\wedge
\cdots \wedge dx_n$ in the expression of (\ref{wedge}) has vanishing
order of $x_1$ at least $(n-1)r_1$. However the coefficient before
$dx_2\wedge dx_3\wedge \cdots \wedge dx_n$ in the expression of the
$(n-1)$-form $(\pi^*(z_1))^j\omega_k$, $1\le j\le n-1$, has
vanishing order of $x_1$ at most $(n-1)r_1-1$. These $(n-1)$-forms
cannot be the linear combinations of (\ref{wedge}). Therefore we
also have produced at least $n-1$ holomorphic $(n-1)$-forms on $M$
which are not obtained by pulling back of holomorphic $(n-1)$-forms
on $V$.
\end{itemize}
\end{proof}
\section*{Acknowledgements}
The first author would like to thank S.S.-T. Yau for useful
discussions. The first author would also like thank N. Mok for
supporting his research when he was in the University of Hong Kong.
Finally, both authors heartily thank the referees for their
invaluable suggestions and comments. In particular, both authors are
grateful to the referee who helped us to shorten the proof of
Theorem \ref{counter ex}.

%\end{comment}

\end{document}